\newtheorem{theorem}{Theorem}[section]
\newtheorem{corollary}[theorem]{Corollary}
\newtheorem{claim}[theorem]{Claim}
\newtheorem{proposition}[theorem]{Proposition}
\theoremstyle{definition}
\begin{document}

\title{Hamiltonicity of token graphs of fan graphs}

\author{Luis Manuel Rivera\thanks{Unidad Acad\'emica de Matem\'aticas, Universidad Aut\'onoma de Zacatecas, Calzada Solidaridad entronque Paseo a la Bufa, Zacatecas, Zac. CP. 98000, Mexico.
Email: luismanuel.rivera@gmail.com} 
\and Ana Laura Trujillo-Negrete
\thanks{Departamento de Matem\'aticas, Cinvestav, Av. IPN \# 2508, Col. San Pedro Zacatenco, Mexico, Cd. de Mexico, CP. 07360, Mexico. Email: ltrujillo@math.cinvestav.mx}
}
\date{}

\maketitle{}

\begin{abstract}

 In this note we show that the token graphs of fan graphs are Hamiltonian. This result provides another proof of the Hamiltonicity of Johnson graphs and also extends previous results obtained by Mirajkar and Priyanka on the token graphs of wheel graphs.
\end{abstract}

{\it Keywords:}  Token graphs;  Johnson graphs; Hamiltonian graphs.\\
{\it AMS Subject Classification Numbers:}    05C38; 05C45.

\section{Introduction.}

Let $G$ be a simple graph of order $n$ and let $k$ be an integer such that $1\leq k\leq n-1$. The {\it $k$-token graph}, or {\it  symmetric $k$th power}, of $G$ is the graph $G^{(k)}$ whose vertices are the $k$-subsets of $V(G)$ and two vertices are adjacent in $G^{(k)}$ if their symmetric difference is an edge of $G$. A classical example is the Johnson graph $J(n, k)$, which is isomorphic to the $k$-token graph of the complete graph $K_n$. This class of graphs is widely studied and has connections with coding theory \cite{Jo, EB,GS, lieb, neun} (another connection of token graphs with coding theory was showed in \cite{oeisa}).

The definition of $k$-token graphs (without a name) appeared in a work of  Rudolph \cite{rudolph}, in connection with problems in quantum mechanics and with the graph isomorphism problem. Rudolph presented examples of cospectral graphs $G$ and $H$ such that their corresponding $2$-token graphs are not cospectral.  Audenaert et al. \cite{aude}, proved that the $2$-token graphs of strongly regular graphs with the same parameters are cospectral, and suggested that for a given positive integer $k$ there exists infinitely many pairs of non-isomorphic graphs with cospectral $k$-token graphs. This conjecture was proved by Barghi and Ponomarenko \cite{barghi} and, independently, by Alzaga et al. \cite{alzaga}. Later, Fabila-Monroy et al. \cite{FFHH} reintroduced the $k$-token graphs as part of several models of swapping in the literature \cite{vanden,yama}, and studied some properties of these graphs: connectivity, diameter, cliques, chromatic number, Hamiltonian paths and Cartesian product of token graphs. This line of research was continued by Carballosa et al. \cite{token2} who studied regularity and planarity, de Alba et al. \cite{dealba2}, who presented some results about independence and matching numbers, and Mirajkar et al.~\cite{keer}, who studied some covering properties of token graphs. Finally, Lea\~nos and Trujillo-Negrete~\cite{leatrujillo} proved a conjecture of Fabila-Monroy et al. \cite{FFHH} about the connectivity of token graphs and de Alba et. al. \cite{dealba} classified the triangular graphs (in other words, the $2$-token graphs of complete graphs) that are Cohen-Macaulay. 

A graph is Hamiltonian if it contains a Hamiltonian cycle. It is well known that $J(n, k)$ is Hamiltonian~\cite{ho, zhang}, in fact, it is Hamiltonian connected~\cite{asplach}. As was noted in~\cite{FFHH}, the existence of a Hamiltonian cycle in $G$ does not imply that $G^{(k)}$ contains a Hamiltonian cycle. For example, if $k$ is even then $K_{m, m}^{(k)}$ is not Hamiltonian. 

We are interested in graphs $G$ such that its token graphs are Hamiltonian. The fan graph $F_{n}$ is the join of graphs $K_1$ and $P_{n-1}$. In this note we show that the token graphs of fan graphs are Hamiltonian. Our result provides another proof that $J(n, k)$ is Hamiltonian, and also extends some of  the results obtained by Mirajkar and Priyanca Y. B.~\cite{keer} about the Hamiltonicity of the token graphs of wheel graphs. 

\section{Main result}
First we present some definitions and notations. For vertices $u, v$ in graph $G$ we write $u \sim v$ to mean that $u$ and $v$ are adjacent vertices in $G$. We write $G \simeq G'$ if $G$ and $G'$ are isomorphic graphs. A spanning subgraph of $G$ is a subgraph $H$ such that $V(H)=V(G)$. The following proposition is obvious.  
\begin{proposition}\label{hspanning}
If $H$ is a spanning subgraph of $G$ and $H$ is Hamiltonian then $G$ is Hamiltonian.
\end{proposition}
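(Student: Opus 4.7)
The plan is to unwind the definitions; the statement is essentially tautological, so no clever trick is needed. First I would let $C$ be a Hamiltonian cycle of $H$, viewed concretely as a cyclic sequence $v_1 v_2 \cdots v_n v_1$ of vertices where each $v_iv_{i+1}$ (indices mod $n$) lies in $E(H)$ and every vertex of $H$ appears exactly once.

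Next I would invoke the two defining properties of a spanning subgraph. From $V(H) = V(G)$ the sequence $v_1, \dots, v_n$ is exactly an enumeration of $V(G)$, so the cycle traverses every vertex of $G$ once. From $E(H) \subseteq E(G)$ each consecutive pair $v_iv_{i+1}$ is also an edge of $G$, so $C$ is a valid cycle in $G$ as well. Hence $C$ is a Hamiltonian cycle of $G$, which is what we needed.

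The only conceptual point to be careful about is the distinction between $H$ being a spanning subgraph versus merely a subgraph; this is where the hypothesis $V(H) = V(G)$ is used, as otherwise the cycle could miss vertices of $G$. There is no genuine obstacle in this proof — as the authors note, the statement is obvious — so I would present it in two or three sentences at most, with no case analysis or auxiliary constructions required.
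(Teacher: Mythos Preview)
Your proposal is correct and is exactly the verification one would write for this statement; the paper itself gives no proof at all, simply declaring the proposition ``obvious,'' so your two-sentence unwinding of the definitions is entirely in line with (indeed, slightly more explicit than) the paper's treatment.
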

One of the main properties of token graphs is that $G^{(1)}$ and $G$ are isomorphic. Moreover,  $G^{(k)} \simeq G^{(n-k)}$ for any $k \in \{1, \dots, n-1\}$. Another known property of token graphs is the following. 

\begin{proposition}\label{tokenspanning}
If $H$ is a subgraph of $G$ then $H^{(k)}$ is a subgraph of $G^{(k)}$. Even more, if $H$ is a spanning subgraph of $G$ then  $H^{(k)}$ is a spanning subgraph of $G^{(k)}$.
\end{proposition}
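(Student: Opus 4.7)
The plan is to prove the two claims by directly unpacking the definition of a $k$-token graph and checking the vertex and edge inclusions. Nothing fancy should be required here; this is essentially a bookkeeping argument once the definitions are unpacked.

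First I would handle the subgraph part. Assume $H$ is a subgraph of $G$, i.e.\ $V(H)\subseteq V(G)$ and $E(H)\subseteq E(G)$. To show $V(H^{(k)})\subseteq V(G^{(k)})$, note that every vertex of $H^{(k)}$ is a $k$-subset of $V(H)$ and hence, since $V(H)\subseteq V(G)$, a $k$-subset of $V(G)$; that is exactly a vertex of $G^{(k)}$. For the edge inclusion, take any edge $AB$ of $H^{(k)}$. By definition the symmetric difference $A\triangle B$ is an edge of $H$, say $A\triangle B=\{u,v\}\in E(H)$. Since $E(H)\subseteq E(G)$, the pair $\{u,v\}$ is also an edge of $G$, and the same sets $A,B$ are vertices of $G^{(k)}$ whose symmetric difference is an edge of $G$; hence $AB\in E(G^{(k)})$.

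For the spanning part, additionally assume $V(H)=V(G)$. Then the $k$-subsets of $V(H)$ coincide with the $k$-subsets of $V(G)$, giving $V(H^{(k)})=V(G^{(k)})$. Combined with the first part, $H^{(k)}$ is a spanning subgraph of $G^{(k)}$.

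There is essentially no obstacle: the whole argument is a direct translation of the definitions, and the only place one must be a little careful is to verify that the same unordered pair $\{u,v\}$ that witnesses adjacency of $A,B$ in $H^{(k)}$ still witnesses adjacency in $G^{(k)}$, which is immediate from $E(H)\subseteq E(G)$. I would keep the proof to a short paragraph and avoid introducing extra notation.
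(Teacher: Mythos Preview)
Your proof is correct; it is the straightforward verification from the definition of token graphs. The paper does not actually supply a proof of this proposition---it is stated as a ``known property'' without argument---so there is nothing further to compare.
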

 
For a fan graph $F_n$ we assume that the vertices of $P_{n-1}$ are $\{1, \dots, n-1\}$ and the vertex in $K_1$ is labeled as $n$. For vertex $A=\{a_1, \dots, a_k\}$ of $F_n^{(k)}$ we use the convention that $a_1< \dots < a_k$. 

The main result of this note is the following.

\begin{theorem}\label{mainth}
Let $n$ and $k$ be positive integers with $n\geq 3$ and $1\leq k \leq n-1$. Then $F_n^{\{k\}}$ is Hamiltonian.
\end{theorem}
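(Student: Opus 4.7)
The plan is to prove Theorem \ref{mainth} by induction on $n$, beginning with the boundary cases $k \in \{1, n-1\}$: since $F_n^{(k)} \simeq F_n^{(n-k)}$ and $F_n^{(1)} \simeq F_n$, both reduce to the fact that $F_n$ itself has the Hamiltonian cycle $1 - 2 - \cdots - (n-1) - n - 1$.

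For the inductive step I take $2 \leq k \leq n-2$ and split $V(F_n^{(k)})$ into the layers $\alpha$ (subsets containing the path endpoint $1$) and $\beta$ (subsets avoiding $1$). Since $F_n - \{1\}$ is isomorphic to $F_{n-1}$ (the residual path $2 - 3 - \cdots - (n-1)$ together with $n$ as center), the induced subgraph on $\beta$ is isomorphic to $F_{n-1}^{(k)}$, and a similar argument (using that $\{1\}\cup S_1$ and $\{1\}\cup S_2$ are adjacent exactly when $S_1 \triangle S_2$ is an edge of $F_n$ not incident to $1$) shows the subgraph induced on $\alpha$ is isomorphic to $F_{n-1}^{(k-1)}$. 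By the inductive hypothesis both layers admit Hamiltonian cycles $C_\alpha$ and $C_\beta$. To combine these I would apply the standard two-edge merge. The inter-layer edges come precisely from the two edges of $F_n$ incident to $1$, namely $\{1,2\}$ and $\{1,n\}$, so $\{1\}\cup S \in \alpha$ is joined to $S \cup \{2\}$ or $S \cup \{n\}$ in $\beta$ when those are still $k$-subsets. It therefore suffices to find an edge $e_\alpha = \{\{1\}\cup S_1, \{1\}\cup S_2\}$ of $C_\alpha$ together with its parallel edge $e_\beta = \{S_1 \cup \{c\}, S_2 \cup \{c\}\}$ of $C_\beta$ for a common $c \in \{2, n\}$; removing $e_\alpha, e_\beta$ and inserting the crossings $\{1\}\cup S_i \sim S_i \cup \{c\}$ for $i = 1, 2$ then produces a Hamiltonian cycle of $F_n^{(k)}$.

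The main obstacle will be forcing such a parallel pair to appear in the cycles delivered by the induction. I expect to resolve this by strengthening the inductive statement so that $F_m^{(j)}$ admits a Hamiltonian cycle containing a prescribed edge of a particular form, for instance an edge between two $j$-subsets both avoiding (or both containing) the center vertex $m$. The strengthened hypotheses on $F_{n-1}^{(k-1)}$ and $F_{n-1}^{(k)}$ then supply the matched $e_\alpha$ and $e_\beta$ in a coordinated way. Extra care is needed at the boundary $k = n-2$, where $F_{n-1}^{(k)}$ falls into the extremal range $j = m - 1$ and the strengthening may degenerate; such cases can be handled by direct verification, using the isomorphism $F_m^{(m-1)} \simeq F_m$ to reduce to the Hamiltonian cycle of $F_m$ itself.
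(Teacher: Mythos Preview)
Your outline captures the same core idea as the paper: prove a strengthened statement by induction, namely that $F_m^{(j)}$ is Hamiltonian \emph{with a specified edge on the cycle}, and use that edge to glue smaller pieces together. The paper, however, partitions $V(F_n^{(k)})$ not into your two layers $\alpha,\beta$ but into the $n-k+1$ sets $V_i=\{A:\min A=i\}$; each induced subgraph $S_i$ is isomorphic to $F_{n-i}^{(k-1)}$, and the strengthened hypothesis supplies in every $S_i$ a Hamiltonian cycle through the single edge $X_iY_i=\{\{i,n-k+1,\dots,n-1\},\{i,n-k+1,\dots,n-2,n\}\}$. Because $X_i\triangle X_{i+1}=Y_i\triangle Y_{i+1}=\{i,i+1\}$, consecutive layers are joined automatically, so the stitching is a straightforward concatenation of Hamiltonian paths.

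Your two-layer version is conceptually cleaner but puts a heavier burden on the strengthening, and this is where the proposal has a genuine gap. To merge $C_\alpha$ and $C_\beta$ you need not just one prescribed edge in each cycle, but a \emph{matched pair}: an edge $\{\{1\}\cup S_1,\{1\}\cup S_2\}$ in $C_\alpha$ and the specific edge $\{S_1\cup\{c\},S_2\cup\{c\}\}$ in $C_\beta$, for the same $c\in\{2,n\}$. If you try to push through a single prescribed edge $E(m,j)$ as the paper does, you will find that $E(n-1,k-1)$ (lifted to~$\alpha$) and $E(n-1,k)$ (sitting in~$\beta$) are in general not parallel via $c\in\{2,n\}$; for instance, with the paper's own choice they become parallel only when $n-k=2$. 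You therefore need either a stronger hypothesis (e.g.\ a cycle through \emph{two} designated edges, or through any edge of a certain type) and a proof that this stronger hypothesis can itself be maintained through the merge, or a more clever choice of $E(m,j)$ together with a verification. You flag this as ``the main obstacle'' but do not resolve it; until you pin down the exact strengthened statement and check both the parallel-edge condition and that the merged cycle again contains the required edge, the inductive step is not complete. The paper's finer decomposition sidesteps this coordination issue entirely, which is what makes its choice of prescribed edge so simple.
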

\begin{proof}
For $k=1$, $F_n^{(1)} \simeq F_n$ which is Hamiltonian so in the rest of the proof we assume that $k\geq 2$. We will show that $F_n^{(k)}$ has a Hamiltonian cycle such that the vertices $\{n-k, n-k+1,\dots,n-2, n\}$  and  $\{n-k, n-k+1,\dots, n-2, n-1\}$ are adjacent in the cycle. The sequence of vertices $\{1, 3\}\{1, 2\}\{2,3\}\{1, 3\}$ is a Hamiltonian cycle in $F_3^{(2)}$.
The proof for $n \geq 4$ is by double induction. First we show the case $k=2$ and $n \geq 4$. The sequence of vertices 
\begin{equation*}
\begin{split}
&\{1,n-1\}\{1,n\}\{1,n-2\}\{1,n-3\}\dots\{1,3\}\{1,2\} \\ &\{2,n\}\{2,n-1\}\{2,n-2\}\{2,n-3\}\dots\{2,4\}\{2,3\} \\ & \qquad \qquad \qquad \qquad \qquad \vdots \\ &\{n-3,n\}\{n-3,n-1\}\{n-3,n-2\}\\ &\{n-2,n\}\{n-2,n-1\}\\&\{n-1,n\}\\&\{1,n-1\},
\end{split}
\end{equation*}
 is a Hamiltonian cycle in $F_n^{(2)}$, where vertices $\{n-2,n-1\}$ and $\{n-2,n\}$ are adjacent in the cycle. We assume as induction hypothesis that $F_{n'}^{(k^\prime)}$ satisfies the conditions whenever $k^\prime<k$ and $n^\prime > k^\prime$, or $F_{n'}^{(k)}$ satisfies the conditions whenever $n^\prime < n$ and $n^\prime > k$.
 \begin{claim}
Let $S_i$ be the subgraph of $F_n^{(k)}$ induced by the vertex set  
\[
V_i=\left\{ \{a_1, \dots, a_k\} \in V(F_n^{(k)}) \colon a_1=i\right\},
\]
with $1\leq i\leq n-k$. Then $S_i \simeq F_{n-i}^{(k-1)}$. 
\end{claim}
\begin{proof} Suppose that $V(F_{n-i})=\{i+1, \dots, n\}$ with $V(P_{n-i-1})=\{i+1, \dots, n-1\}$ and $n$ the vertex of $K_1$. Then the function $A \mapsto A\setminus\{i\}$ is a graph isomorphism between $S_i$ and $F_{n-i}^{(k-1)}$.
\end{proof}

We identify $S_i$ with $F_{n-i}^{(k-1)}$ using the isomorphism given in the proof of the claim. By induction there exists a Hamiltonian cycle $C_i$ in $S_i$, where vertices $X_i:=\{i, n-k+1, \dots, n-2,n-1\}$ and $Y_i:=\{i, n-k+1, \dots, n-2,n\}$ are adjacent in $C_i$, for $1\leq i\leq n-k$. Let $P_i$ be the Hamiltonian subpath of $C_i$ from $X_i$ to $Y_i$, for $1\leq i\leq n-k$. Let $Z$ denote the vertex $\{n-k+1, n-k+2, \dots, n-1, n\}$. Therefore $V_{n-k+1}=\{Z\}$. 

Let $D_{i}=\{n-k,n-k+1, \dots, n-1, n\}\setminus \{i\}$, with $n-k+1\leq i \leq n$. Then the vertes set $V_{n-k}$ of $S_{n-k}$ is $\{D_n, D_{n-1},\dots, D_{n-k+1}\}$. Also, we have $X_{n-k}=D_n$ and  $Y_{n-k}=D_{n-1}$. Let 
\begin{equation*}
Q=D_{n-2}D_{n-3} \dots D_{n-k+2}D_{n-k+1},
\end{equation*}
which, in fact, is a path in $S_{n-k}$ because $D_i\triangle D_{i-1}=\{i-1,i\}$,
for $n-k+2\leq i\leq n-2$. Now,
\begin{eqnarray*}
X_{n-k}\triangle D_{n-2}&=&\{n-2,n\} \\ 
Y_{n-k}\triangle D_{n-2}&=&\{n-2,n-1\} \\ 
Z\triangle D_{n-k+1}&=&\{n-k,n-k+1\}
\end{eqnarray*}
and hence
\begin{equation*}
\label{eq:sim}
\begin{split}
& X_{n-k}\sim D_{n-2}, \\
&Y_{n-k}\sim D_{n-2}, \\
&D_{n-k+1}\sim Z, 
\end{split}
\end{equation*} 
in $F_n^{(k)}$. Notice that $X_i\triangle X_{i+1}=\{i,i+1\}$ and $Y_i\triangle Y_{i+1}=\{i,i+1\}$, for $1\leq i\leq n-k-1$, and $X_1\triangle Z=\{1,n\}$. Therefore we can define a Hamiltonian cycle $\mathcal{C}$ in $F_n^{(k)}$ as 
\[
X_1\stackrel{\longrightarrow}{\text{\tiny $P_1$}}Y_1Y_2\stackrel{\longrightarrow}{\text{\tiny $P_2$}}X_2\dots X_{(n-k-1)}\stackrel{\longrightarrow}{\text{\tiny $P_{n-k-1}$}}Y_{(n-k-1)}Y_{(n-k)}X_{(n-k)}D_{n-2}\stackrel{\longrightarrow}{\text{\tiny $Q$}} D_{n-k+1}ZX_1,
\]
if $n-k$ is even, and 
\[
X_1\stackrel{\longrightarrow}{\text{\tiny $P_1$}}Y_1Y_2\stackrel{\longrightarrow}{\text{\tiny $P_2$}}X_2\dots Y_{(n-k-1)}\stackrel{\longrightarrow}{\text{\tiny $P_{n-k-1}$}}X_{(n-k-1)}X_{(n-k)}Y_{(n-k)}D_{n-2}\stackrel{\longrightarrow}{\text{\tiny $Q$}}D_{n-k+1}ZX_1,
\]
if $n-k$ is odd.
 Furthermore  
\[
\{n-k,n-k+1,...,n-2,n-1\}=X_{n-k} \sim Y_{n-k}=\{n-k,n-k+1,...,n-2,n\},
\] in $\mathcal{C}$, as desired. 

\end{proof}

The wheel graph $W_n$ is the joint graph of $K_1$ and $C_{n-1}$. It is known that Johnson graphs~\cite{ho, zhang} and the $k$-token graphs of wheel graphs~\cite{keer} are Hamiltonian, the following corollary provides another proof of this facts.
\begin{corollary}
If $F_n$ is a spanning subgraph of $G$ then $G^{(k)}$ is Hamiltonian. In particular the Johnson graphs and the $k$-token graphs of wheel graphs are Hamiltonian.
\end{corollary}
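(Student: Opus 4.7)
The plan is to prove by induction on $k$ a strengthening of the claim: for every $n \geq 3$ and every $1 \leq k \leq n-1$, the graph $F_n^{(k)}$ admits a Hamiltonian cycle in which the vertices $\{n-k, n-k+1, \dots, n-2, n-1\}$ and $\{n-k, n-k+1, \dots, n-2, n\}$ appear consecutively. This strengthening is essential because the inductive step will cut each smaller cycle open at its distinguished edge and splice the resulting paths together.

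The base cases are $k = 1$, where $F_n^{(1)} \simeq F_n$ is Hamiltonian with the required adjacency via the cycle $1, 2, \dots, n-1, n, 1$, and $k = 2$, for which I would exhibit an explicit row-by-row Hamiltonian cycle of $F_n^{(2)}$ that first sweeps the vertices containing $1$, then those containing $2$, and so on, arranging that the distinguished pair $\{n-2, n-1\}$ and $\{n-2, n\}$ occupies two consecutive positions. The very small case $n = 3$ is verified directly from the cycle $\{1,3\}\{1,2\}\{2,3\}\{1,3\}$.

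For the inductive step I would partition $V(F_n^{(k)})$ by minimum element, setting $V_i = \{A : \min A = i\}$ for $1 \leq i \leq n-k+1$. The map $A \mapsto A \setminus \{i\}$ is a graph isomorphism between the induced subgraph $S_i$ and $F_{n-i}^{(k-1)}$, so by induction each $S_i$ with $1 \leq i \leq n-k-1$ carries a Hamiltonian cycle $C_i$ in which the images of $X_i := \{i, n-k+1, \dots, n-1\}$ and $Y_i := \{i, n-k+1, \dots, n-2, n\}$ are adjacent; cutting that edge yields a Hamiltonian path $P_i$ from $X_i$ to $Y_i$. The block $V_{n-k+1}$ consists of the single vertex $Z := \{n-k+1, \dots, n\}$, and $V_{n-k}$ admits the explicit description $\{D_{n-k+1}, \dots, D_n\}$ with $D_j := \{n-k, \dots, n\} \setminus \{j\}$.

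To assemble the final cycle, I would zig-zag through the paths $P_1, P_2, \dots, P_{n-k-1}$ using the identities $X_i \triangle X_{i+1} = Y_i \triangle Y_{i+1} = \{i, i+1\}$, reversing the orientation of alternate $P_i$ so that consecutive block endpoints are joined by edges of $F_n^{(k)}$. For the last block $V_{n-k}$ I would bypass the induction and use the explicit Hamiltonian path $Y_{n-k}, X_{n-k}, D_{n-2}, D_{n-3}, \dots, D_{n-k+1}$, whose first edge realizes the adjacency $X_{n-k} \sim Y_{n-k}$ demanded by the strengthened hypothesis and whose subsequent edges use $D_j \triangle D_{j-1} = \{j-1, j\}$ or the apex $n$. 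The cycle is then closed through $Z$ by means of $D_{n-k+1} \sim Z$ (symmetric difference $\{n-k, n-k+1\}$) and $Z \sim X_1$ (symmetric difference $\{1, n\}$). The main obstacle is a careful case analysis on the parity of $n-k$, which governs whether the zig-zag delivers $X_{n-k}$ or $Y_{n-k}$ as the entry point into $V_{n-k}$; both cases must be reconciled so that $X_{n-k}$ and $Y_{n-k}$ remain consecutive in the final cycle, preserving the stronger hypothesis for the next inductive step.
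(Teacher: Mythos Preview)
Your argument reproduces, almost step for step, the paper's proof of its main Theorem~2.3 (that $F_n^{(k)}$ is Hamiltonian with the strengthened adjacency condition): the partition by minimum element, the isomorphism $S_i \simeq F_{n-i}^{(k-1)}$, the zigzag splice of the paths $P_i$, the explicit handling of $V_{n-k}$ via the vertices $D_j$, and the parity split on $n-k$ all appear in the paper in essentially the same form.

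However, the statement you were asked to prove is the \emph{Corollary}, whose proof in the paper is a two-line deduction: since $F_n$ is a spanning subgraph of $G$, Proposition~2.2 gives that $F_n^{(k)}$ is a spanning subgraph of $G^{(k)}$; then Theorem~2.3 together with Proposition~2.1 yields that $G^{(k)}$ is Hamiltonian, and in particular this applies to $G=K_n$ and $G=W_n$. Your write-up establishes Hamiltonicity of $F_n^{(k)}$ but never mentions $G$, the spanning-subgraph transfer, or the specializations to Johnson and wheel graphs. The missing step is trivial, but as written your proposal proves the Theorem rather than the Corollary.
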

\begin{proof}
As $F_n$ is a spanning subgraph of $G$ then $H^{(k)}$ is a spanning subgraph of $G^{(k)}$ by Proposition~\ref{tokenspanning}. The $k$-token graph of $F_n$ is Hamiltonian by Theorem~\ref{mainth}  and hence $G^{(k)}$ is Hamiltonian by Proposition~\ref{hspanning}.  In particular $F_n$ is a spanning subgraph of $W_n$ and $K_n$.
\end{proof}
\section*{Acknowledgments}
Part of this work was made when the second author was a master student at Universidad Aut\'onoma de Zacatecas. The authors would like to thank the anonymous reviewer for his/her corrections and suggestions.

\end{document}